\newtheorem{theorem}{Theorem}[section]
\newtheorem*{theorem*}{Theorem}
\newtheorem{corollary}[theorem]{Corollary}
\newtheorem{proposition}{Proposition}[section]
\newtheorem{definition}[theorem]{Definition}
\numberwithin{equation}{section}
\begin{document}
	\title[Universality]{Universality of the divergence}

\author{Lei Ni}

\address{Lei Ni. Zhejiang Normal University, Jinhua, Zhejiang, China, 321004}
\address{department of Mathematics, University of California, San Diego, La Jolla, CA 92093, USA}
\email{lni.math.ucsd@gmail.com}

\author{Yijian Zhang}

\address{Yijian Zhang. School of Mathematical Sciences,  University of Science and Technology of China, Hefei, Anhui, China, 230026}

\email{zyj\_math@mail.ustc.edu.cn}

\subjclass[2010]{}

\begin{abstract} Algebraists asked whether or not  an operator on the module of smooth sections of the  tangent bundle over the commutative ring of smooth functions of a smooth (orientable) manifold (can be any piece of a compact or a complete manifold) can be characterized by two axioms. In this note we confirm this for any smooth manifold $M$ under the assumption that $H^1(M, \mathbb{R})=\{0\}$.
\end{abstract}

\maketitle

\section{Introduction}

The purpose of this paper is to give an axiomatic description of the divergence operator acting on smooth vector fields. We first consider the following algebraic definition. Let $M^n$ ($n=\dim_{\mathbb{R}}(M)$) be a smooth manifold. We denote $\mathcal{A}(M)$ the ring of all smooth functions, and $\mathfrak{X}(M)$ the smooth sections of the tangent bundle $TM$, namely all smooth vector fields, which is viewed as a module over $\mathcal{A}(M)$.

 \begin{definition}\label{def:11}
 A linear operator $D:\mathfrak{X}(M) \to \mathcal{A}(M)$ is called a {\bf{divergence operator}} if it satisfies the following two conditions:

 (i) $D$ is a $1$-cocycle in the sense that for any $X_1, X_2\in \mathfrak{X}(M)$,
 \begin{equation}\label{eq:11}
 D[X_1, X_2]=[D(X_1), X_2]+[X_1, D(X_2)];
 \end{equation}

 (ii) For $f\in\mathcal{A}(M)$ and $X\in \mathfrak{X}(M)$,
 \begin{equation}\label{eq:12}
 D(f \cdot X)=f D(X) + X(f).
 \end{equation}

 \end{definition}

An explanation of the right hand side of (\ref{eq:11}) is in order. First it does not say that $D$ is a derivation since $D$ is not a self-map of the Lie algebra $\mathfrak{g}=\mathfrak{X}(M)$. The right hand side is understood in terms of operators acting on $\mathcal{A}(M)$. Namely, for any $f\in \mathcal{A}(M)$,
\begin{eqnarray*}
&\, &[D(X_1), X_2] (f) = D(X_1) \cdot X_2(f)- X_2(D(X_1)\cdot f)=-X_2(D(X_1)) \cdot f; \\
&\, &[X_1, D(X_2)] (f) = X_1(D(X_2)\cdot f)-D(X_2)\cdot  X_1 (f)=X_1(D(X_2))\cdot f.
\end{eqnarray*}
Equivalently we may write it as
$$
[D(X_1), X_2]+[X_1, D(X_2)] =X_1(D(X_2))-X_2(D(X_1))\in \mathcal{A}(M).
$$
Then the condition (\ref{eq:11}) can be interpreted as  that the operator $D: \mathfrak{X}(M)\to \mathcal{A}(M)$ is a closed 1-form taking values in the linear space $\mathcal{A}(M)$. It represents an element in the cohomology space (of Lie algebra $\mathfrak{g}=\mathfrak{X}(M)$) $H^1(\mathfrak{X}(M),\mathcal{A}(M))$ with respect to the natural linear representation, namely viewing $X$ as a linear map of $\mathcal{A}(M)$ to itself.  One may refer \cite{IT} for more details on the Lie algebra cohomology $H^1(\mathfrak{g}, V)$ associated with general representation $\rho: \mathfrak{g}\to \mathfrak{gl}(V)$ for any vector space $V$.

Recall that for a Riemannian manifold $(M, g)$, the divergence operator $\operatorname{div}$ on a vector field $X$, is defined by taking trace of the linear map $Y\xrightarrow{}\nabla_{Y}X$, where $\nabla$ is the Levi-Civita connection. If $G=\det(g_{ij})$ with respect to a local coordinate chart, and $X=\sum_{i=1}^n X^i \frac{\partial}{\partial x^i}$,
$$
\operatorname{div}(X)=\sum \frac{1}{\sqrt{G}}\frac{\partial}{\partial x^i}\left(\sqrt{G} X^i\right).
$$

However, the divergence operator can be defined without a Riemannian metric, in particular, not involving its Levi-Civita connection. What is needed is a volume form, namely a nowhere-vanishing top form (this is equivalent to orientability of the manifold), and the Lie derivative of smooth vector fields on differential forms.  Please refer to Ch. 5, Section 7 of \cite{Mat} and Section 3.9 of \cite{AF}. Precisely, for a volume form $\Omega$, the divergence is defined by
\begin{equation}\label{eq:13}
L_X \Omega =\operatorname{div}_\Omega (X) \Omega.
\end{equation}
On a Riemannian manifold $(M, g)$, the previously defined divergence operator in terms of Levi-Civita connection coincides with the second one  with respect to the standard
volume form $\sqrt{G}dx^1\wedge\cdots \wedge dx^n$ (cf. Page 71, Lemma 5.12 of \cite{Sakai}). Since a lot of Riemannian metrics can de defined on a given smooth manifold, it follows that there are many corresponding volume forms, hence many divergence operators associated with them.  The most useful property of the operator defined via (\ref{eq:13}) is that for $X$ with compact support,
\begin{equation}\label{eq:14}
\int_M \operatorname{div}_\Omega (X) \Omega=0,
\end{equation}
which follows from the Cartan formula $L_X=d\cdot \iota_X +\iota_X\cdot d $ and the Stokes' theorem.

The main result here is that

\begin{theorem}\label{thm:main}
If $M$ is an orientable manifold with $H^1(M, \mathbb{R})=\{0\}$, then  $D$ satisfies $(\ref{eq:11})$ and $(\ref{eq:12})$ if and only if $D$ is the divergence operator $\operatorname{div}_\Omega$ with respect to some volume form $\Omega$  on $M$. In particular, the result holds on any simply connected  manifold.
\end{theorem}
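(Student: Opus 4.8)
The plan is to prove the equivalence in two directions, reducing the forward implication to Cartan calculus on the volume form and reducing the converse to a single application of the hypothesis $H^1(M,\mathbb{R})=\{0\}$. \emph{For the forward direction} (that every $\operatorname{div}_\Omega$ satisfies (i) and (ii)) I would argue directly. For (ii), apply the Cartan formula $L_X=d\,\iota_X+\iota_X\,d$ to $\Omega$; since $\Omega$ is a top form one has $d\Omega=0$ and $df\wedge\Omega=0$, so $\iota_X(df\wedge\Omega)=0$ yields $df\wedge\iota_X\Omega=X(f)\,\Omega$, and expanding $L_{fX}\Omega=d(f\,\iota_X\Omega)=df\wedge\iota_X\Omega+f\,L_X\Omega$ gives $\operatorname{div}_\Omega(fX)=X(f)+f\operatorname{div}_\Omega(X)$. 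For (i), use $L_{[X_1,X_2]}=[L_{X_1},L_{X_2}]$ together with the elementary identity $L_{X}(h\,\Omega)=\big(X(h)+h\operatorname{div}_\Omega(X)\big)\Omega$ for $h\in\mathcal{A}(M)$; applying $L_{X_1}$ to $L_{X_2}\Omega=\operatorname{div}_\Omega(X_2)\Omega$ and subtracting the symmetric term, the products of divergences cancel and one is left with $\operatorname{div}_\Omega[X_1,X_2]=X_1(\operatorname{div}_\Omega(X_2))-X_2(\operatorname{div}_\Omega(X_1))$, which is exactly (i) in the form noted after Definition~\ref{def:11}.

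\emph{The converse} is where the real structure lies. Orientability furnishes a reference volume form $\Omega_0$, hence a genuine divergence operator $\operatorname{div}_{\Omega_0}$, and I would study the difference $E:=D-\operatorname{div}_{\Omega_0}$. Subtracting the two instances of (ii), the inhomogeneous terms $X(f)$ cancel, so $E(fX)=f\,E(X)$; thus $E$ is $\mathcal{A}(M)$-linear, and by the standard tensoriality lemma (localization by bump functions) there is a unique smooth $1$-form $\omega$ with $E(X)=\omega(X)$ for all $X$. Subtracting the two instances of (i) shows that $E$, equivalently $\omega$, obeys the linear cocycle identity $\omega([X_1,X_2])=X_1(\omega(X_2))-X_2(\omega(X_1))$. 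Comparing this with the intrinsic (Koszul) formula $d\omega(X_1,X_2)=X_1(\omega(X_2))-X_2(\omega(X_1))-\omega([X_1,X_2])$ shows precisely that $d\omega=0$.

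Now the hypothesis enters: since $H^1_{\mathrm{dR}}(M)\cong H^1(M,\mathbb{R})=\{0\}$ by de Rham's theorem, the closed form $\omega$ is exact, $\omega=d\phi$ for some $\phi\in\mathcal{A}(M)$, so that $E(X)=X(\phi)$. I would then set $\Omega:=e^{\phi}\Omega_0$, which is nowhere vanishing and hence again a volume form, and compute from $L_X(e^\phi\Omega_0)=\big(X(\phi)+\operatorname{div}_{\Omega_0}(X)\big)e^\phi\Omega_0$ that $\operatorname{div}_\Omega(X)=X(\phi)+\operatorname{div}_{\Omega_0}(X)=E(X)+\operatorname{div}_{\Omega_0}(X)=D(X)$, completing the proof; the simply connected case follows since $\pi_1(M)=0$ forces both $H^1(M,\mathbb{R})=0$ and orientability. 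I expect the only genuinely delicate points to be the careful justification of tensoriality for $E$ (that an $\mathcal{A}(M)$-linear map into $\mathcal{A}(M)$ is represented by a global $1$-form) and the clean identification of the abstract $1$-cocycle condition with de Rham closedness; after that, the vanishing of $H^1$ does all the real work, integrating $\omega$ to the potential $\phi$ that exponentiates to the sought volume form.
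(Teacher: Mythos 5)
Your proposal is correct and follows essentially the same route as the paper: form $E=D-\operatorname{div}_{\Omega_0}$, observe it is $\mathcal{A}(M)$-linear and hence a genuine $1$-form, show the cocycle condition is exactly $d$-closedness (you via the intrinsic formula for $d\omega$, the paper via coordinate vector fields --- the same computation), invoke $H^1(M,\mathbb{R})=0$ to write $E=d\phi$, and set $\Omega=e^{\phi}\Omega_0$. The only difference is that you carry out the forward-direction verification that the paper leaves to the reader, and you do it correctly.
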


This gives an axiomatic characterization of the general divergence operators on a smooth manifold with $H^1(M, \mathbb{R})=\{0\}$. The result and the title of the paper is motivated by the axiomatic definition of the determinant in linear algebra (for example see section I.F. of \cite{Ar}). 

\section{Proof of the result}

First it is easy to check that any $\operatorname{div}_\Omega$ satisfies (\ref{eq:11}) and (\ref{eq:12}). The key is one of Cartan identities:
\begin{equation}\label{eq:cartan}
L_{[X, Y]}=[L_X, L_Y].
\end{equation}
We leave the detailed checking to the readers.
Now we focus on the proof that given any $D$, there exists a volume form $\Omega_D$ such that $D=\operatorname{div}_{\Omega_D}$.

Given a volume form $\Omega_0$, let $D_0$ be the divergence operator with respect to $\Omega_0$ defined by (\ref{eq:13}). First let $E=D-D_0$. By (\ref{eq:11}) and (\ref{eq:12}) we have that the operator $E$ satisfies that
\begin{eqnarray}
&\, & E(f \cdot X) = f \cdot E(X); \label{eq:21}\\
&\,& E([X, Y])= [E(X), Y]+[X, E(Y)]. \label{eq:22}
\end{eqnarray}
The equation (\ref{eq:21}) implies that $E$, which is still a $1$-form of the Lie algebra $\mathfrak{X}(M)$ valued in $\mathcal{A}(M)$, is in fact linear over $\mathcal{A}(M)$. Pick a local coordinate chart and write
$E(\frac{\partial}{\partial x^i})=E_i$, the $\mathcal{A}(M)$-linearity makes $E$ a true $1$-form $\in \Omega^1(M)$, which can be written locally as
$$
E=\sum_{i=1}^n E_idx^i
$$ Now use $[\frac{\partial}{\partial x^i}, \frac{\partial}{\partial x^j}]=0$ we have by (\ref{eq:22}) that
$$
0=\frac{\partial E_j}{\partial x^i}-\frac{\partial E_i}{\partial x^j},
$$
hence $E$ is $d$-closed.  By assumption there exists a smooth function $f$ such that $E=df$, namely $E(X)=X(f)$. Locally we have $E_i=\frac{\partial f}{\partial x^i}$. Now since $D=D_0+E$ and $D_0=\operatorname{div}_{\Omega_0}$, writing $\Omega=e^f\Omega_0$, direct calculation shows that for any $X\in \mathfrak{X}(M)$
\begin{eqnarray*}
\operatorname{div}_{\Omega} (X)&=& \frac{L_X (e^f \Omega_0)}{e^f \Omega_0} \\
&=&\frac{1}{e^f} \left( e^f \cdot \operatorname{div}_{\Omega_0} ( X)+X(e^f)\right)\\
&=& \operatorname{div}_{\Omega_0} ( X)+ X(f)\\
&=& (D_0+E)(X).
\end{eqnarray*}
This proved the claim that $D$ is given by $\operatorname{div}_{\Omega}$.

 Notice that, the above argument shows that the general divergence operators is completely characterized by the formula $D=\operatorname{div}_{\Omega_0}+E$ for some closed form $E$, and two $\operatorname{div}_{\Omega}$ differ from an exact form, thus $H^1(M, \mathbb{R})=\{0\}$ is also a necessary condition for Theorem \ref{thm:main}.

 \begin{corollary} Without the assumption $H^1(M, \mathbb{R})=\{0\}$. The space of operators satisfying $(\ref{eq:11})$ and $(\ref{eq:12})$ consists of
 $\{ \operatorname{div}_{\Omega_0}+E\}$ with $E$ being any closed $1$-form.
 \end{corollary}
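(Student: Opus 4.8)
The plan is to observe that the proof of Theorem \ref{thm:main} already contains everything needed, once we isolate the single place where the cohomological hypothesis is actually used. Fix a volume form $\Omega_0$ and set $D_0=\operatorname{div}_{\Omega_0}$; the corollary asserts the set equality between the operators satisfying (\ref{eq:11})--(\ref{eq:12}) and $\{D_0+E : E \text{ a closed } 1\text{-form}\}$, so I would establish the two inclusions separately.

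For the inclusion of the left side into the right, suppose $D$ satisfies (\ref{eq:11}) and (\ref{eq:12}) and put $E=D-D_0$. The computation preceding Theorem \ref{thm:main} shows that $E$ obeys (\ref{eq:21}) and (\ref{eq:22}): the former makes $E$ genuinely $\mathcal{A}(M)$-linear, hence a true $1$-form $E\in\Omega^1(M)$, and the latter, read in local coordinates against $[\partial/\partial x^i,\partial/\partial x^j]=0$, forces $E$ to be $d$-closed. The key remark to make explicit is that this portion of the argument never invokes $H^1(M,\mathbb{R})=\{0\}$; that hypothesis enters the proof of Theorem \ref{thm:main} only at the final step, where one upgrades $E$ closed to $E=df$. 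Thus every admissible $D$ has the form $D_0+E$ with $E$ closed, independently of the cohomology of $M$.

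For the reverse inclusion I would verify that $D_0+E$ satisfies the two axioms whenever $E$ is any closed $1$-form. Since $\operatorname{div}_{\Omega_0}$ already satisfies (\ref{eq:11}) and (\ref{eq:12}) (as noted at the start of Section 2) and both axioms are linear in the operator, it suffices to check that a closed $E$ satisfies the homogeneous relations (\ref{eq:21}) and (\ref{eq:22}). Tensoriality (\ref{eq:21}) is precisely the statement that $E$ is a genuine $1$-form, while (\ref{eq:22}), written in the interpreted form $E([X,Y])=X(E(Y))-Y(E(X))$, is exactly the vanishing of $dE$ through the intrinsic identity
$$
dE(X,Y)=X(E(Y))-Y(E(X))-E([X,Y]).
$$
Hence $d$-closedness of $E$ is equivalent to (\ref{eq:22}), and $D_0+E$ inherits both axioms by linearity.

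The substance of the corollary is therefore not a new estimate but the recognition that the sole use of $H^1(M,\mathbb{R})=\{0\}$ in Theorem \ref{thm:main} is in passing from \emph{closed} to \emph{exact}. The one step I would take care over is the clean equivalence between the cocycle condition (\ref{eq:22}) and $d$-closedness of $E$ via the displayed formula, since this is what promotes the coordinate computation used in the theorem to the coordinate-free characterization asserted here.
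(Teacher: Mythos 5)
Your proposal is correct and follows the same route as the paper: the paper's proof consists precisely of the observation that the argument for Theorem \ref{thm:main} uses $H^1(M,\mathbb{R})=\{0\}$ only to pass from $E$ closed to $E$ exact, together with the intrinsic identity $dE(X_1,X_2)=X_1(E(X_2))-X_2(E(X_1))-E([X_1,X_2])$ showing that the cocycle condition on a $1$-form $E$ is equivalent to $dE=0$. You have merely spelled out the two inclusions more explicitly than the paper does.
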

 \begin{proof}
 Observe that for a $1$-form $E$, and $X_i\in \mathfrak{X}(M)$, $i=1,2 $
 $$
 dE (X_1, X_2)=X_1(E(X_2))-X_2(E(X_1))-E([X_1, X_2]).
 $$
Hence $E$ satisfies (\ref{eq:11}) if and only if $dE=0$. \end{proof}

   A couple of remarks are in order. First, given any volume form, it is easy to find a Riemannian metric such that its volume form is the given one (the harder version is the Calabi conjecture solved by Yau for the K\"ahler setting which demands a metric within a K\"ahler class).

Secondly, for any affine connection $\widetilde{\nabla}$, define $A_X(\cdot)=L_X(\cdot)-\widetilde{\nabla}_X(\cdot)$. Then one can also define the divergence with respect to $\widetilde{\nabla}$ by $\operatorname{div}_{\widetilde{\nabla}}(X)=-\operatorname{trace}(A_X)$. Since when $\widetilde{\nabla}$ has no torsion, this is the same as  the trace of $\widetilde{\nabla}_{(\cdot)}X$, it extends the one for Levi-Civita connection. There exists the following result which also connects $\operatorname{div}_{\widetilde{\nabla}}$ with the one defined via the volume form in (\ref{eq:13}). Our definition of the divergence  for any affine connection $\widetilde{\nabla}$ does not coincide with taking the trace of $\widetilde{\nabla}_{(\cdot)}X$  in some of the literatures. However it has the advantage of (\ref{eq:14}). Moveover it can be viewed as the trace of the associated torsion connection $\widetilde{\nabla}^T$, which is  defined by
$$
\widetilde{\nabla}^T_YX=-A_{X}(Y).
$$

\begin{proposition}[Kobayashi-Nomizu]\label{prop:KN} Let $\widetilde{\nabla}$ be an affine connection on $M$.
Assume that $\Omega$ is a volume form satisfying $\widetilde{\nabla}(\Omega)=0$, which means $\Omega$ is invariant under parallelism with respect to $\widetilde{\nabla}$, then $\operatorname{div}_{\widetilde{\nabla}}=\operatorname{div}_\Omega$. The converse also holds.
\end{proposition}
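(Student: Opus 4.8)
The plan is to compare the two operators by examining how $L_X$ and $\widetilde{\nabla}_X$ act on the one object they both see, the top form $\Omega$, and to package the discrepancy into the endomorphism $A_X$. The first thing I would record is that $A_X(Y)=L_X Y-\widetilde{\nabla}_X Y=[X,Y]-\widetilde{\nabla}_X Y$ is $\mathcal{A}(M)$-linear in $Y$: a one-line check using the Leibniz rules for the bracket and the connection gives $A_X(fY)=fA_X(Y)$, so $A_X$ is a genuine field of endomorphisms of $TM$ and $\operatorname{trace}(A_X)\in\mathcal{A}(M)$ is well defined. Note that $\operatorname{div}_{\widetilde{\nabla}}(X)=-\operatorname{trace}(A_X)$ refers only to the connection, whereas $\operatorname{div}_\Omega$ refers only to $\Omega$; the content of the proposition is that the hypothesis $\widetilde{\nabla}\Omega=0$ is exactly what ties them together.

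The central step is the identity, valid for any top form and any affine connection,
$$L_X\Omega=\widetilde{\nabla}_X\Omega-\operatorname{trace}(A_X)\,\Omega.$$
To establish it I would use that $L_X$ and $\widetilde{\nabla}_X$ are both derivations of the full tensor algebra, commute with contractions, and agree on functions; hence their difference $L_X-\widetilde{\nabla}_X$ is $\mathcal{A}(M)$-linear and is the derivation extension of the endomorphism it induces on vector fields, which is precisely $A_X$. On a $1$-form the defining Leibniz identities yield $(L_X-\widetilde{\nabla}_X)\omega=-\,\omega\circ A_X$, and extending by the wedge-Leibniz rule to $\Omega=\omega^1\wedge\cdots\wedge\omega^n$ in a local coframe collapses the resulting sum to $-\operatorname{trace}(A_X)\,\Omega$, since in each summand only the diagonal matrix entry of $A_X$ survives the exterior product.

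Granting this identity, both directions are immediate. If $\widetilde{\nabla}\Omega=0$, then $\widetilde{\nabla}_X\Omega=0$ for all $X$, so $L_X\Omega=-\operatorname{trace}(A_X)\,\Omega=\operatorname{div}_{\widetilde{\nabla}}(X)\,\Omega$; comparing with the defining relation $L_X\Omega=\operatorname{div}_\Omega(X)\,\Omega$ gives $\operatorname{div}_\Omega=\operatorname{div}_{\widetilde{\nabla}}$. For the converse, the same identity reads $\operatorname{div}_\Omega(X)\,\Omega=\widetilde{\nabla}_X\Omega+\operatorname{div}_{\widetilde{\nabla}}(X)\,\Omega$; if the two divergences coincide, the scalar terms cancel and $\widetilde{\nabla}_X\Omega=0$ for every $X$, i.e. $\Omega$ is parallel.

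The only genuinely delicate point is the central identity, specifically verifying that the pointwise derivation action of $A_X$ on a top-degree form equals $-\operatorname{trace}(A_X)$ times that form. I would make this airtight by carrying out the coframe computation explicitly rather than quoting a general formula for the induced action on $\Lambda^n T^*M$, both to pin down the sign and because it isolates the single place where the parallelism hypothesis $\widetilde{\nabla}\Omega=0$ actually enters; the remaining manipulations are purely formal.
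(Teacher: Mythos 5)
Your argument is correct, but note that the paper does not actually prove this proposition: its ``proof'' is a one-line citation to page 282 of Kobayashi--Nomizu, so your write-up supplies a genuine argument where the paper defers entirely to the literature. Your route is the standard and, I think, the right one: since $L_X$ and $\widetilde{\nabla}_X$ are both derivations of the tensor algebra commuting with contractions and agreeing on functions, their difference is tensorial and is the derivation extension of $A_X$; the computation $(L_X-\widetilde{\nabla}_X)\omega=-\omega\circ A_X$ on $1$-forms and the collapse to $-\operatorname{trace}(A_X)\,\Omega$ on a top form are both right (for a general local expression $f\,dx^1\wedge\cdots\wedge dx^n$ one just uses that the difference derivation kills the function $f$, so the coframe computation suffices). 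The resulting identity $L_X\Omega=\widetilde{\nabla}_X\Omega+\operatorname{div}_{\widetilde{\nabla}}(X)\,\Omega$ cleanly gives both implications at once, since $\Omega$ is nowhere vanishing, and it makes transparent exactly where the hypothesis $\widetilde{\nabla}\Omega=0$ enters --- which is precisely the sort of information the paper's bare citation hides. The sign is also consistent with the paper's convention $\operatorname{div}_{\widetilde{\nabla}}(X)=-\operatorname{trace}(A_X)$, which is worth having pinned down explicitly as you do.
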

\begin{proof} See page 282 of \cite{KN}. \end{proof}

Hence in view of that the Riemannian volume form is parallel with respect to the Levi-Civita connection, $\operatorname{div}_{\widetilde{\nabla}}$ defined as above provides a nature generalization of the divergence operator defined for a Riemannian metric and Levi-Civita connection via the trace of $\nabla_{(\cdot)} X$.

Furthermore, for a K\"ahler manifold $(M^m, \omega)$ one may consider $\mathfrak{X}'(M)$ and $\mathfrak{X}''(M)$, namely the vector fields of $(1, 0)$ and $(0, 1)$ types. Since metric is parallel with respect to the Levi-Civita connection, one can use the definitions in Section 4 of \cite{NN} to define the divergence for $X'\in \mathfrak{X}'(M)$ and $X''\in \mathfrak{X}''(M)$. It can be checked that they coincide with $\operatorname{div}_{\omega^m}$ defined via the Lie derivative. In this case $D$ splits into the sum of two maps $D': \mathfrak{X}'(M)\to \mathcal{A}(M)$ and $D'':\mathfrak{X}''(M)\to \mathcal{A}(M)$ satisfying (\ref{eq:11}) and (\ref{eq:12}). For the volume form with an extra  weight, the above consideration of Kobayashi-Nomizu holds, and  the divergence operator finds its uses in proving the reductivity of automorphism group for a compact K\"ahler-Einstein manifolds with positive scalar curvature \cite{Mat1}.

Finally, the orientability is not essential here since one may replace the volume form by the density and define the divergence accordingly via (\ref{eq:13}) (cf. page 30 of \cite{BGV}). Moreover Theorem \ref{thm:main} still holds for a $s$-density ($s\in\mathbb{R}$) and the associated $s$-divergence operator, which satisfies (\ref{eq:11}) and replaces (\ref{eq:12}) by
 \begin{equation}\label{eq:2s}
  D(f \cdot X)=f D(X) + s\cdot X(f).
 \end{equation}
In particular, since $0$-densities are functions, $df$ is a $0$-divergence operator and is the only type satisfying (\ref{eq:11}) and (\ref{eq:2s})(case $s=0$) for manifolds with $H^1=\{0\}$.

\bigskip

\noindent\textbf{Acknowledgments.} {The authors thank Efim Zelmanov for bringing this question to our attention, Peng Du, Vyacheslav Futorny  for their interests.}

\end{document}